\newcommand{\R}{\mathbb{R}}
\newcommand{\N}{\mathbb{N}}
\newcommand{\Q}{\mathbb{Q}}
\newtheorem{theorem}{Theorem}[section]
\newtheorem{proposition}{Proposition}[section]
\newtheorem{remark}{Remark}
\newtheorem{lemma}{Lemma}[section]
\newcommand{\tos}{\rightrightarrows} 
\title{A note on quasi-equilibrium problems}
\author{
John Cotrina
	\thanks{Universidad del Pac\'ifico. Av. Salaverry 2020, Jes\'us Mar\'ia, Lima, Per\'u. Email: 
	\texttt{\{ cotrina\_je,~zuniga\_jj\}@up.edu.pe}} 
	\and Javier Z\'u\~niga\footnotemark[1]
}
\begin{document}
\maketitle
\begin{abstract}
The purpose of this paper is to prove the existence of solutions of quasi-equilibrium problems
without any generalized monotonicity assumption. Additionally, we give an application to quasi-optimization problems.
\bigskip

\noindent{\bf Keywords: Generalized convexity, Equilibrium problem, Quasi-equilibrium problem}

\bigskip

\noindent{{\bf MSC (2010)}: 90C47,~  49J35} 

\end{abstract}

\section{Introduction and definitions}

Given a real topological vector space $X$, a subset $C$ of $X$, a bifunction $f:C\times C\to\R$ and
a set-valued map $K:C\tos C$, the \emph{quasi-equilibrium problem} (QEP) consists in finding
\begin{align}\label{QEP}
x\in K(x)\mbox{ such that }f(x,y)\geq0,\mbox{ for all }y\in K(x). 
\end{align}
When $K(x)=C$ for any $x\in C$, the QEP coincides with the classical \emph{equilibrium problem}, which was introduced
by Oettli and Blum in \cite{OB93}, and has been extensively studied in recent years (see for instance 
\cite{BP01,castellani2012,SN10} and the references therein). 

The classical example of quasi-equilibrium problem is the quasi-variational inequality problem, which consists in finding
$x\in K(x)$, such that there exists $x^*\in T(x)$ with $\langle x^*,y-x\rangle\geq0,\mbox{ for all }y\in K(x)$ where $T:X\tos X^*$ is a set-valued map, $X^*$ is the dual space of $X$ and $\langle\cdot,\cdot\rangle$ denotes
the duality paring between $X$ and $X^*$. So, if $T$ has compact values, and we define 
the {\em representative bifunction} $f_T$ of $T$ by
\[
f_T(x,y)=\sup_{x^*\in T(x)}\langle x^*,y-x\rangle, 
\]
it follows that every solution of the QEP associated to $f_T$ and $K$ is a solution 
of the quasi-variational inequality problem associated to $T$ and $K$, and conversely. 

Recently in \cite{ACI} the authors show existence of solution of the QEP using generalized monotonicity 
for $f$ in a finite dimensional space. Castellani and Giuli (\cite{castellani_Giuli15}) proved an existence result 
which does not involve any monotonicity assumption of $f$ in separable Banach  spaces. 

The aim of this note is to show existence of solution for the QEP without generalized monotonicity assumptions
but for Hausdorff locally convex real
topological vector spaces which generalizes the spaces in \cite{ACI,castellani_Giuli15}.

\section{Existence results}\label{S3}
Our existence result will be obtained as a consequence of Kakutani's Fixed Point Theorem which is stated in the next result and it can be found in \cite{GD}.
\begin{theorem}[Kakutani's theorem]\label{Kakutani}
Let $C$ be a nonempty compact convex subset of a locally convex space $X$ and 
let $S: C \tos C$ be a set-valued map. If $S$ is upper semicontinuous such that for all $x\in C$, $S(x)$ is nonempty, closed and convex, 
then $S$ admits a fixed point.
\end{theorem}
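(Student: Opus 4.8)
The plan is to prove this by reducing the infinite-dimensional statement to the classical finite-dimensional Kakutani theorem and, from there, to Brouwer's fixed point theorem.

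First I would treat the case $X=\mathbb{R}^n$ with $C$ a compact convex polytope (in particular a simplex). Given a triangulation of $C$ of mesh below $\varepsilon$, pick at each vertex $v$ a point $s(v)\in S(v)$ and extend affinely over each simplex; this yields a continuous single-valued map $s_\varepsilon:C\to C$, using convexity of $C$ and of the values of $S$. Brouwer's theorem gives $x_\varepsilon$ with $s_\varepsilon(x_\varepsilon)=x_\varepsilon$. Letting $\varepsilon\to 0$, compactness of $C$ lets me extract a convergent subsequence $x_\varepsilon\to x^\ast$; using that $S$ is upper semicontinuous with closed values, so that its graph is closed in $C\times C$, together with the fact that $x_\varepsilon$ is a convex combination of points taken from $S$ at vertices that converge to $x^\ast$, I would conclude $x^\ast\in S(x^\ast)$. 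An arbitrary compact convex $C\subset\mathbb{R}^n$ is then handled by retracting a surrounding simplex onto $C$.

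Next I would handle a general Hausdorff locally convex space $X$ by finite-dimensional approximation. Fix a continuous seminorm $p$ (equivalently a closed convex balanced neighborhood of the origin). By compactness, choose $x_1,\dots,x_m\in C$ whose $p$-balls of radius $\varepsilon$ cover $C$, set $C_p=\operatorname{conv}\{x_1,\dots,x_m\}$, and build a continuous map $\pi_p:C\to C_p$ from a partition of unity subordinate to this cover, arranged so that $p(\pi_p(x)-x)\le\varepsilon$ for all $x\in C$. The composite $S_p:=\pi_p\circ S|_{C_p}:C_p\tos C_p$ is upper semicontinuous with nonempty closed convex values, so the finite-dimensional case gives a fixed point: there exist $z_p\in C_p$ and $w_p\in S(z_p)$ with $z_p=\pi_p(w_p)$, hence $p(z_p-w_p)\le\varepsilon$. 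Finally I would pass to the limit along the directed family of pairs $(p,\varepsilon)$: by compactness of $C$ the net $(z_p)$ has a subnet converging to some $x^\ast\in C$, and since $p(z_p-w_p)$ can be made arbitrarily small for each fixed seminorm, the matching subnet $(w_p)$ converges to the same $x^\ast$. Because $S$ is upper semicontinuous with closed values and $C$ is compact, the graph of $S$ is closed in $C\times C$; from $w_p\in S(z_p)$, $z_p\to x^\ast$ and $w_p\to x^\ast$ we obtain $x^\ast\in S(x^\ast)$.

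The main obstacle is the bookkeeping in the locally convex, possibly non-metrizable setting: one must organize the approximation so that the finite-dimensional polytopes $C_p$, the continuous ``projections'' $\pi_p$, and the induced maps $S_p$ all behave well simultaneously over the net of seminorms, and verify carefully that graph-closedness of $S$ survives passing to subnets. The single-valued Brouwer/Kakutani step and the convexity arguments are routine by comparison.
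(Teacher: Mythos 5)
The paper does not actually prove this statement: it is imported as the Kakutani(--Fan--Glicksberg) theorem and cited to \cite{GD}, so your proposal is being measured against the standard textbook proof rather than against anything in the paper. Your outline is indeed that standard route (finite-dimensional Kakutani via triangulation and Brouwer, then Schauder-type finite-dimensional approximation and a limit over the net of seminorms), and the overall architecture is correct.

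There is, however, one genuine gap in the reduction step. You define $S_p=\pi_p\circ S|_{C_p}$ and assert it is upper semicontinuous with nonempty closed \emph{convex} values. The Schauder projection $\pi_p$ built from a partition of unity is continuous but not affine, so $\pi_p(S(x))$ need not be convex, and the finite-dimensional Kakutani theorem does not apply to $S_p$ as defined. The standard repair is to pass to $\tilde S_p(x)=\operatorname{co}\bigl(\pi_p(S(x))\bigr)$, which is compact and convex in $\operatorname{span}(C_p)$ and still upper semicontinuous (Carath\'eodory plus compactness of $S(x)$). A fixed point then reads $z_p=\sum_j\mu_j\,\pi_p(w_j)$ with $w_j\in S(z_p)$; setting $w_p=\sum_j\mu_j w_j$, which lies in $S(z_p)$ precisely because the values of $S$ are convex, one recovers $p(z_p-w_p)\le\varepsilon$ and your limit argument over the net of seminorms goes through unchanged. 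A smaller gloss of the same kind occurs in the finite-dimensional step: to conclude $x^\ast\in S(x^\ast)$ from $x_\varepsilon=\sum_i\lambda_i^\varepsilon s(v_i^\varepsilon)$ you need not only closedness of the graph but also convexity of $S(x^\ast)$, since the limit is a convex combination of limit points each of which individually lies in $S(x^\ast)$. Both fixes are routine, but as written the convexity of the approximating maps is exactly the point that fails.
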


We denote by QEP$(f,K)$ the solution set of \eqref{QEP}
and we define the following set-valued map $S:C\tos C$ by
\[
S(x)=\{x_0\in K(x):~f(x_0,y)\geq0\mbox{ for all }y\in K(x)\}.
\]

The proposition below plays an important role in our existence result.
\begin{proposition}\label{closed-S}
Let $f:C\times C\to\R$ be a bifunction and let $K:C\tos C$ be a set-valued map,
where $C$ is a nonempty subset of a real topological vector space $X$.
If $K$ is closed and lower semicontinuous and $\{(x,y)\in C\times C:~f(x,y)\geq0\}$ is closed; then $S$ is closed.
\end{proposition}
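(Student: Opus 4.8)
The plan is to prove directly that $S$ has closed graph (which is what "$S$ is closed" means here, and which together with compactness will later feed Kakutani's theorem). So I would take an arbitrary net $(x_\alpha,u_\alpha)$ in the graph of $S$ with $(x_\alpha,u_\alpha)\to(x,u)$ in $C\times C$, and show that $u\in S(x)$, i.e.\ that $u\in K(x)$ and $f(u,y)\geq0$ for every $y\in K(x)$. (If the graph of $S$ is empty there is nothing to prove.)

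The membership $u\in K(x)$ is the easy half. Since $u_\alpha\in S(x_\alpha)\subseteq K(x_\alpha)$ for each $\alpha$, the net $(x_\alpha,u_\alpha)$ lies in the graph of $K$, which is closed by hypothesis; hence its limit $(x,u)$ belongs to that graph, that is, $u\in K(x)$.

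For the other half, fix $y\in K(x)$. Here I would invoke the lower semicontinuity of $K$ at $x$: since $x_\alpha\to x$, there are a subnet $(x_{\alpha_\beta})$ and a net $(y_\beta)$ with $y_\beta\in K(x_{\alpha_\beta})$ and $y_\beta\to y$. For each $\beta$ we have $u_{\alpha_\beta}\in S(x_{\alpha_\beta})$ and $y_\beta\in K(x_{\alpha_\beta})$, so the definition of $S$ gives $f(u_{\alpha_\beta},y_\beta)\geq0$; thus $(u_{\alpha_\beta},y_\beta)$ lies in the set $A:=\{(a,b)\in C\times C:~f(a,b)\geq0\}$. Passing to the subnet preserves the convergence $u_{\alpha_\beta}\to u$, while $y_\beta\to y$, so $(u_{\alpha_\beta},y_\beta)\to(u,y)$; since $A$ is closed by hypothesis, $(u,y)\in A$, i.e.\ $f(u,y)\geq0$. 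As $y$ was an arbitrary element of $K(x)$, we conclude $u\in S(x)$, so the graph of $S$ is closed.

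I expect the only real subtlety to be the net-theoretic formulation of lower semicontinuity, because $X$ is only assumed to be a (possibly non-metrizable) topological vector space, so sequences do not suffice: one uses the characterization that $K$ is lower semicontinuous at $x$ exactly when every net $x_\alpha\to x$ admits a subnet $(x_{\alpha_\beta})$ carrying a selection $y_\beta\in K(x_{\alpha_\beta})$ that converges to any prescribed $y\in K(x)$. Passing to such a subnet is harmless here since it preserves both $u_\alpha\to u$ and membership of the terms in the graph of $S$; the rest is a routine diagram chase with nets.
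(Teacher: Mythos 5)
Your proof is correct and follows essentially the same route as the paper: closedness of $K$ gives $u\in K(x)$, lower semicontinuity supplies a selection converging to an arbitrary $y\in K(x)$, and closedness of $\{(a,b):f(a,b)\geq0\}$ passes the inequality to the limit. The only difference is that you phrase the argument with nets and subnets where the paper uses sequences, which is in fact the more careful formulation for a general (possibly non-metrizable) topological vector space.
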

\begin{proof}
Let $(x_n,z_n)_{n\in\N}$ be a sequence contained in the graph of $S$ converging to $(x_0,z_0)$. 
Since $K$ is closed, we have $z_0\in K(x_0)$. The lower semicontinuity of $K$ implies that for any $y\in K(x_0)$, there exists $(y_n)_{\in\N}$ converging to $y$ such that $y_n\in K(x_n)$, for all $n\in\N$. Additionally, as $z_n\in S(x_n)$ we have $f(z_n,y_n)\geq0$ for every $n\in\N$, which in turn implies by hypothesis that $f(z_0,y)\geq0$. Therefore, $z_0\in S(x_0)$.
\end{proof}

We finish this section with our main existence result.

\begin{theorem}\label{main}
Let $f:C\times C\to\R$ be a bifunction and let $K:C\tos C$ be a set-valued map,
where $C$ is a compact convex and nonempty subset of a Hausdorff locally convex real topological vector space $X$.
If the following hold:
\begin{enumerate}
 \item[$i)$] $K$ is closed and lower semicontinuous with convex values;
 \item[$ii)$] $\{x\in C:~f(x,y)\geq0\}$ is convex, for every $y\in C$;
\item[$iii)$] for any subset $\{x_1,\dots,~x_n\}$ of $C$, and any $x \in {\rm co}(\{x_1,\dots,~x_n\})$ 
(here {\rm co} is the convex hull), $\max_{i=1,\dots,n}f(x,x_i)\geq0$;
 \item[$iv)$] $\{(x,y)\in C\times C:~f(x,y)\geq0\}$ is closed;
\end{enumerate}
then {\rm QEP}$(f,K)$ is nonempty.
\end{theorem}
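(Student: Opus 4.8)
The plan is to apply Kakutani's Fixed Point Theorem (Theorem~\ref{Kakutani}) to the set-valued map $S:C\tos C$ introduced above; since a fixed point of $S$ is, by definition, an element of {\rm QEP}$(f,K)$, this finishes the proof. So I must check that $S$ is upper semicontinuous and that $S(x)$ is nonempty, closed and convex for every $x\in C$.

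Upper semicontinuity and closed values essentially come for free. Hypotheses $(i)$ and $(iv)$ are exactly what Proposition~\ref{closed-S} requires, so $S$ is closed; and a set-valued map with closed graph and values contained in the compact set $C$ is automatically upper semicontinuous with closed values. For convexity of $S(x)$ I would write
\[
S(x)=K(x)\cap\bigcap_{y\in K(x)}\{a\in C:~f(a,y)\geq0\},
\]
where $K(x)$ is convex by $(i)$ and each set $\{a\in C:~f(a,y)\geq0\}$ is convex by $(ii)$; an intersection of convex sets is convex.

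The crux is the nonemptiness of $S(x)$ for a fixed $x\in C$, which is precisely the solvability of the equilibrium problem for $f$ on the set $D:=K(x)$. Here $D$ is nonempty (as implicitly assumed for $K$), convex by $(i)$, and closed, hence compact, because the graph of $K$ is closed and $C$ is compact. For $y\in D$ put $F_y:=\{a\in D:~f(a,y)\geq0\}$; by $(iv)$ each $F_y$ is a closed, hence compact, subset of $D$. Condition $(iii)$, applied to finite subsets of $D\subseteq C$, says exactly that for every $y_1,\dots,y_n\in D$ and every $x'\in{\rm co}(\{y_1,\dots,y_n\})$ one has $x'\in\bigcup_{i=1}^n F_{y_i}$; that is, $\{F_y\}_{y\in D}$ is a KKM family on $D$. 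The Fan--KKM theorem then gives $\bigcap_{y\in D}F_y\neq\emptyset$, and any point of this intersection lies in $S(x)$.

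Having verified all the hypotheses of Theorem~\ref{Kakutani}, $S$ admits a fixed point $\bar x\in S(\bar x)$, that is, $\bar x\in K(\bar x)$ and $f(\bar x,y)\geq0$ for all $y\in K(\bar x)$; hence $\bar x\in{\rm QEP}(f,K)$. The two points I expect to need the most care are the standard but nontrivial passage from closed graph to upper semicontinuity, where compactness of $C$ is used, and the correct application of the Fan--KKM lemma on each slice $K(x)$ — in particular making sure condition $(iii)$, stated for finite subsets of $C$, transfers to finite subsets of $K(x)$, which it does since $K(x)\subseteq C$.
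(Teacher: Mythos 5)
Your proposal is correct and follows the paper's proof almost verbatim: Kakutani's theorem applied to $S$, with closedness of the graph from Proposition~\ref{closed-S}, upper semicontinuity from closed graph plus compactness of $C$, convexity of the values from $(i)$ and $(ii)$, and nonemptiness of $S(x)$ from a KKM-type equilibrium existence argument on $K(x)$. The only difference is that where you invoke the Fan--KKM lemma directly on each slice $D=K(x)$, the paper instead cites Theorem~\ref{SN} (Nasri--Sosa) applied with $C$ replaced by $K(x)$ --- which is the same argument packaged as a black box --- and, like the paper, you correctly note that nonemptiness of the values of $K$ is being assumed implicitly.
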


For the prove of the previous theorem we need the following result.

\begin{theorem}\cite[Theorem 2.3]{SN10}\label{SN}
Let $f:C\times C\to\R$ be a bifunction, where $C$ is a compact convex and nonempty subset of a Hausdorff real topological vector space $X$.
If for any $\{x_1,\dots,~x_n\} \subset C$ and $x \in {\rm co}(\{x_1,\dots,~x_n\})$, $\max_{i=1,\dots,n}f(x,x_i)\geq0$; and  $\{y\in C:~f(x,y)\geq0\}$ is closed, for every $x\in C$; then there exists $x_0\in C$ such that $f(x_0,y)\geq0$, for all $y\in C$.
\end{theorem}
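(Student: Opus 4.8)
The plan is to derive this from the Knaster--Kuratowski--Mazurkiewicz principle (Fan's lemma, see \cite{GD}) rather than from Kakutani's theorem. First I would introduce the set-valued map $G:C\tos C$ defined by
\[
G(y)=\{x\in C:~f(x,y)\geq0\},
\]
and observe that the desired conclusion is equivalent to $\bigcap_{y\in C}G(y)\neq\emptyset$: a point $x_0$ lies in this intersection exactly when $f(x_0,y)\geq0$ for every $y\in C$. This reformulation turns the existence statement into an intersection problem tailored to Fan's lemma.

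Next I would verify the two structural properties of $G$. The closedness assumption ensures that each value $G(y)=\{x\in C:~f(x,y)\geq0\}$ is closed in the compact set $C$, hence compact. The decisive point is that $G$ is a KKM map: given a finite set $\{y_1,\dots,y_n\}\subset C$ and a point $x\in{\rm co}(\{y_1,\dots,y_n\})$, applying the first hypothesis to this finite set yields $\max_{i=1,\dots,n}f(x,y_i)\geq0$, so $f(x,y_i)\geq0$ for at least one index $i$, i.e. $x\in G(y_i)$; consequently ${\rm co}(\{y_1,\dots,y_n\})\subset\bigcup_{i=1}^n G(y_i)$. Here the convexity of $C$ is what guarantees ${\rm co}(\{y_1,\dots,y_n\})\subset C$, so that the covering is meaningful.

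With these two facts in hand, Fan's lemma produces a point $x_0\in\bigcap_{y\in C}G(y)$, which is the required solution. I expect the only delicate point to be the passage from the finite-dimensional covering property to the nonemptiness of the global intersection: on each simplex ${\rm co}(\{y_1,\dots,y_n\})$ the classical finite-dimensional KKM lemma uses the closedness of the sets $G(y_i)$ to give $\bigcap_{i=1}^n G(y_i)\neq\emptyset$, so the family $\{G(y)\}_{y\in C}$ of closed subsets of $C$ has the finite intersection property; compactness of $C$ then upgrades this to $\bigcap_{y\in C}G(y)\neq\emptyset$. This is precisely the content packaged in Fan's lemma, so invoking it directly keeps the argument short, and the work reduces to the two routine checks above.
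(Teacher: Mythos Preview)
The paper does not actually prove this theorem: it is quoted verbatim from \cite[Theorem~2.3]{SN10} and then used as a black box inside the proof of Theorem~\ref{main}. So there is no ``paper's own proof'' to compare against, and any correct argument you supply would already be more than what the paper provides.

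That said, your argument has a genuine gap at the closedness step. The hypothesis of the theorem, as stated, is that for every fixed $x\in C$ the set $\{y\in C:\ f(x,y)\ge 0\}$ is closed. You, however, need closedness of
\[
G(y)=\{x\in C:\ f(x,y)\ge 0\}
\]
for each fixed $y$, since it is the family $\{G(y)\}_{y\in C}$ that must consist of closed (hence compact) sets for Fan's KKM lemma to apply. These are two different conditions: the first controls the second variable, the second controls the first, and neither implies the other in general. Your sentence ``The closedness assumption ensures that each value $G(y)$ is closed'' is therefore unjustified, and the KKM argument as written does not go through under the hypothesis exactly as stated.

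It is quite likely that the statement in the paper contains a transcription slip and that the intended hypothesis in \cite{SN10} is the closedness of $\{x\in C:\ f(x,y)\ge 0\}$ for every $y$ (this is the standard assumption in Ky~Fan--type results, and note that in the paper's application via condition~$iv)$ of Theorem~\ref{main} the joint closedness of $\{(x,y):f(x,y)\ge 0\}$ gives both versions anyway). Under that corrected hypothesis your KKM proof is the classical one and is perfectly fine. But you should flag the discrepancy explicitly rather than silently swap the roles of the variables.
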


\begin{proof}[Proof of Theorem \ref{main}]
By Proposition \ref{closed-S}, the set-valued map $S$ is closed. For each $x\in C$, $S(x)$ is closed, convex and nonempty due to conditions $ii),~iii)$ and $iv)$, and Theorem \ref{SN}. As $C$ is compact, we have that
 $S$ is upper semicontinuous. Thus, by Kakutani's theorem  $S$ has a fixed point.
\end{proof}

\begin{remark} \label{diagonal}
Notice that quasiconcavity with respect to the first variable of $f$ implies part $ii)$ of Theorem \ref{main}.
Moreover, if $f(x,\cdot)$ is quasiconvex and $f(x,x)=0$ for any $x\in C$ then part $iii)$ of Theorem \ref{main} holds. 
However, the converse is not true in general. Consider for instance $f:[0,1]\times[0,1]\to\R$ defined by
\[
f(x,y)=\left\lbrace\begin{array}{cl}
                    1,&y\in \Q\cap[0,1]\\
                    0,&y\notin \Q
                   \end{array}\right..
 \]
Clearly $f$ satisfies condition $ii)$ of Theorem \ref{main}, but it is not quasiconvex with respect to its second argument
nor it vanishes on the diagonal of $[0,1]\times[0,1]$.
\end{remark}

\section{Application to quasi-optimization}

Given a real-valued function $h:C\to \R$ and a set-valued map $K:C\tos C$, where $C$ is a subset of a Hausdorff locally convex real topological vector space $X$, the {\em quasi-optimization problem} (QOpt) is described as
\[
\mbox{find }x_0\in K(x_0)\mbox{ such that }~~~ \min_{z\in K(x_0)} h(z)=h(x_0).
\]

\par
The terminology of quasi-optimization problems comes from \cite{GMP00} (see 
formula (8.3) and Proposition 12) and has been recently used in \cite{AC-2013,FKa10}. It 
emphasizes the fact that it is not a standard optimization problem since the 
constraint set depends on the solution and it also highlights the parallelism 
to quasi-equilibrium problems.

\begin{remark}
Under continuity of the constraint set-valued map the continuity of the objective function is not a sufficient 
condition to guarantee the existence of solution for the QOpt.
Consider for instance the function $h:[0,2]\to\R$ and the set-valued map $K:[0,2]\tos [0,2]$ both defined by 
\[
h(x)= \left\lbrace\begin{array}{cl}
                   |x-\frac{1}{2}|, &0\leq x\leq 1\\
                   |x-\frac{3}{2}|,& 1<x\leq 2
                  \end{array}\right.
\qquad K(x)=\left\lbrace\begin{array}{cl}
                   [-\frac{3}{2}x+\frac{3}{2},2], &0\leq x\leq 1\\
                   \left[0,-\frac{3}{2}x+\frac{7}{2}\right],&1<x\leq 2
                  \end{array}\right.
\]

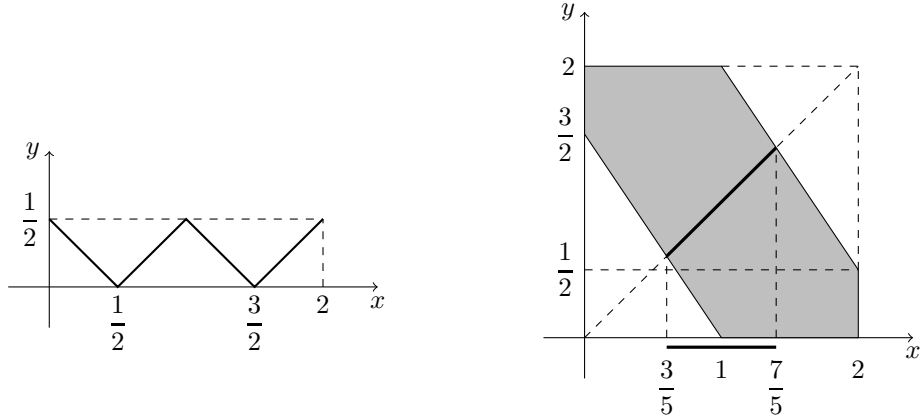
\begin{figure}
\begin{tikzpicture}[scale=1.8]
\fill[color=white](0,0)--(2,0)--(2,-1)--(0,-1)--(0,0);
\draw[->] (-0.3,0)--(2.4,0)node[below]{$x$};
\draw[->] (0,-0.3)--(0,1)node[left]{$y$};
\draw[thick](0,0.5)--(0.5,0) -- (1,0.5) -- (1.5,0)--(2,0.5);
\draw(0.5,0)node[below]{$\displaystyle\frac{1}{2}$};
\draw(1.5,0)node[below]{$\displaystyle\frac{3}{2}$};
\draw(2,0)node[below]{$2$};
\draw(0,0.5)node[left]{$\displaystyle\frac{1}{2}$};
\draw[dashed](2,0)--(2,0.5);
\draw[dashed](0,0.5)--(2,0.5);
\end{tikzpicture}
\hfill
\begin{tikzpicture}[scale=1.8]
\draw[->] (-0.3,0)--(2.4,0)node[below]{$x$};
\draw[->] (0,-0.3)--(0,2.4)node[left]{$y$};
\fill[color=gray!50](0,1.5)--(0,2)--(1,2)--(2,0.5)--(2,0)--(1,0)--(0,1.5);
\draw(0,1.5)--(0,2)--(1,2)--(2,0.5)--(2,0)--(1,0)--(0,1.5);
\draw(1,-0.1)node[below]{$1$};
\draw(2,-0.1)node[below]{$2$};
\draw(0.6,-0.1)node[below]{$\displaystyle\frac{3}{5}$};
\draw(1.4,-0.1)node[below]{$\displaystyle\frac{7}{5}$};
\draw(0,2)node[left]{$2$};
\draw[dashed](0,0.5)--(2,0.5);
\draw(0,0.5)node[left]{$\displaystyle\frac{1}{2}$};
\draw(0,1.5)node[left]{$\displaystyle\frac{3}{2}$};
\draw[dashed](2,0.5)--(2,2);
\draw[dashed](1,2)--(2,2);
\draw[dashed](0,0)--(2,2);
\draw[very thick] (0.6,-0.07)--(1.4,-0.07);
\draw[very thick] (0.6,0.6)--(1.4,1.4);
\draw[dashed](0.6,0)--(0.6,0.6);
\draw[dashed](1.4,0)--(1.4,1.4);
\end{tikzpicture}
\caption{Graphs of $h$ and $K$.} \label{graph}
\end{figure}

Figure \ref{graph} shows the graphs of $h$ and $K$. Clearly, $h$ is continuous and $K$ is closed and lower semicontinuous. The set of fixed points of $K$ is the interval $[3/5,7/5]$. It is not difficult to show that the QOpt does not have a solution.
\end{remark}

Associated to $h$ and $K$, let us define the bifunction $f^h: C\times C\to\R$ by
\[
 f^h(x,y)=h(y)-h(x).
\]
Now, we can characterize the solutions of the QOpt by solutions of the QEP associated to $f^h$ and $K$. We denote by QOpt$(h,K)$ the solution set of the QOpt. The definition of $f^h$ implies the following lemma.
\begin{lemma}\label{QEP-QOpt}
With the previous notation and assuming that $x_0\in C$, then $x_0\in{\rm QEP}(f^h,K)$ if and only if $x_0\in{\rm QOpt}(h,K)$. 
\end{lemma}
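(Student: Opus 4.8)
The plan is simply to unwind both definitions and to observe that, after substituting $f^h$, the two conditions coincide verbatim. First I would fix $x_0 \in C$ and recall that $x_0 \in {\rm QEP}(f^h,K)$ means, by the definition in \eqref{QEP}, that $x_0 \in K(x_0)$ and $f^h(x_0,y) \geq 0$ for every $y \in K(x_0)$. Plugging in $f^h(x_0,y) = h(y) - h(x_0)$, the second condition becomes $h(y) \geq h(x_0)$ for all $y \in K(x_0)$.

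Next I would argue that this last inequality, together with $x_0 \in K(x_0)$, is exactly the statement $\min_{z\in K(x_0)} h(z) = h(x_0)$. Indeed, $h(y) \geq h(x_0)$ for all $y \in K(x_0)$ says precisely that $h(x_0)$ is a lower bound for $h$ on $K(x_0)$; since $x_0 \in K(x_0)$, this lower bound is attained at $x_0$, so the minimum over $K(x_0)$ exists and equals $h(x_0)$. Conversely, if $\min_{z\in K(x_0)} h(z) = h(x_0)$, then in particular $h(y) \geq h(x_0)$ for every $y \in K(x_0)$, i.e. $f^h(x_0,y)\geq 0$ there. Since the requirement $x_0 \in K(x_0)$ appears on both sides, this chain of equivalences yields $x_0 \in {\rm QEP}(f^h,K)$ if and only if $x_0 \in {\rm QOpt}(h,K)$.

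There is no real obstacle here: the lemma is a direct dictionary between the two formulations. The only point worth making explicit is that the infimum in the definition of the QOpt is automatically attained at $x_0$ once the QEP inequality holds, so the equivalence needs no compactness, convexity, or continuity hypotheses — it is purely formal.
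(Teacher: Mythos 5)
Your proposal is correct and matches the paper's approach: the paper gives no separate proof, stating only that ``the definition of $f^h$ implies the following lemma,'' and your unwinding of the two definitions is exactly that intended argument. Your explicit remark that the minimum is attained because $x_0\in K(x_0)$ is the one small point worth spelling out, and you handle it properly.
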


Finally, we are ready for our result about the existence of solutions for the QOpt which generalizes \cite[Proposition 4.2 and Proposition 4.5]{AC-2013}.
\begin{theorem}
Let $h:C \to \R$ be a function and 
let $K:C \tos C$ be a set-valued map, where $C$ is a convex and compact subset of a Hausdorff locally convex real topological vector space $X$. 
If $K$ is closed and lower semicontinuous with convex values, and $h$ is quasiconvex and continuous; then {\rm QOpt}(h,K) is nonempty.
\end{theorem}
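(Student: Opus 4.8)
The strategy is to reduce the quasi-optimization problem to the quasi-equilibrium problem via Lemma~\ref{QEP-QOpt} and then verify the hypotheses of Theorem~\ref{main} for the bifunction $f^h(x,y)=h(y)-h(x)$. By Lemma~\ref{QEP-QOpt} it suffices to show that ${\rm QEP}(f^h,K)\neq\emptyset$, and since $K$ already satisfies condition $i)$ of Theorem~\ref{main} by assumption, the work is entirely in checking conditions $ii)$, $iii)$ and $iv)$ for $f^h$.

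First I would check $ii)$: the set $\{x\in C:~f^h(x,y)\geq0\}=\{x\in C:~h(x)\leq h(y)\}$ is a sublevel set of $h$, hence convex because $h$ is quasiconvex. Next, condition $iii)$: given $\{x_1,\dots,x_n\}\subset C$ and $x\in{\rm co}(\{x_1,\dots,x_n\})$, quasiconvexity of $h$ gives $h(x)\leq\max_{i=1,\dots,n}h(x_i)$, so $\max_{i=1,\dots,n}f^h(x,x_i)=\max_{i=1,\dots,n}h(x_i)-h(x)\geq0$. (This is precisely the observation in Remark~\ref{diagonal}, using that $f^h$ vanishes on the diagonal.) For condition $iv)$, the set $\{(x,y)\in C\times C:~h(y)-h(x)\geq0\}$ is the preimage of $[0,\infty)$ under the continuous map $(x,y)\mapsto h(y)-h(x)$, hence closed.

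With $i)$--$iv)$ verified, Theorem~\ref{main} yields ${\rm QEP}(f^h,K)\neq\emptyset$, and Lemma~\ref{QEP-QOpt} translates this into ${\rm QOpt}(h,K)\neq\emptyset$. I do not anticipate a serious obstacle here: the whole argument is a routine verification, and the only mild subtlety is making sure $f^h$ is genuinely real-valued (which holds since $h:C\to\R$) and that the diagonal condition $f^h(x,x)=0$ is what makes $iii)$ work without any extra hypothesis on $h$ beyond quasiconvexity. The real content was already established in Theorem~\ref{main}.
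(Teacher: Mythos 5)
Your proposal is correct and follows essentially the same route as the paper: reduce to ${\rm QEP}(f^h,K)$ via Lemma~\ref{QEP-QOpt} and verify conditions $i)$--$iv)$ of Theorem~\ref{main}, with $ii)$ coming from quasiconvexity of $h$ (sublevel sets convex), $iii)$ from the diagonal/quasiconvexity observation of Remark~\ref{diagonal}, and $iv)$ from continuity of $h$. Your direct computation $\max_i f^h(x,x_i)=\max_i h(x_i)-h(x)\geq 0$ for $iii)$ is a minor, equally valid shortcut of the paper's appeal to the remark.
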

\begin{proof}
We want to verify all assumptions of Theorem \ref{main}. The first one is trivial. Since $h$ is quasiconvex, the bifunction $f^h$ is quasiconcave with respect to its first argument, which implies $ii)$, and quasiconvex with respect to its second argument. Moreover, $f^h$ vanishes on the diagonal of $C\times C$, hence $f^h$ satisfies condition $iii)$ (see Remark \ref{diagonal}). Additionally, as $h$ is continuous then $f^h$ is continuous and hence condition $iv)$ holds.
Therefore, there exists $x_0\in{\rm QEP}(f^h,K)$. The result follows from Lemma \ref{QEP-QOpt}.
\end{proof}

\end{document}